\newtheorem{thm}{Theorem}[]
\renewcommand{\k}{\ensuremath{\mathds{k}}}
\newcommand{\R}{\ensuremath{\mathds{R}}}
\newcommand{\C}{\ensuremath{\mathds{C}}}
\newcommand{\Q}{\ensuremath{\mathds{Q}}}
\newcommand{\U}{\ensuremath{\mathbf{U}}}
\DeclareMathOperator{\Sym}{Sym}
\DeclareMathOperator{\GL}{GL}
\DeclareMathOperator{\re}{Re}
\DeclareMathOperator{\im}{Im}
\title{Unitary Groups as Stabilizers of Orbits}
\author{Erik Friese}
\address{Universität Rostock, Institut für Mathematik, Ulmenstr. 69, Haus 3, 18057 Rostock, Germany}
\email{erik.friese@uni-rostock.de}
\thanks{The author is partially supported by DFG-grant SCHU 1503/6-1}
\keywords{Finite unitary group, stabilizer of set of vectors}
\subjclass[2010]{51F25, 20B25} 
\begin{document}
\begin{abstract}
We show that a finite unitary group which has orbits spanning the whole space is necessarily the setwise stabilizer of a certain orbit. 
\end{abstract}
\maketitle

I. M. Isaacs has shown that any finite matrix group $G \subset \GL(n,\k)$ over an infinite field $\k$ is the setwise stabilizer of a finite subset $X \subset \k^n$ \cite{Isa77}.
If $G$ acts absolutely irreducible on $\k^n$, it is even possible to take a single orbit of $G$ for $X$.

In general $X$ cannot be chosen as an orbit of $G$, for example if no orbit of $G$ linearly spans $\k^n$. But even if there are orbits of $G$ spanning the space, the stabilizer of any $G$-orbit may be strictly larger than $G$. Consider for example the orthogonal group $G$ generated by the 90 degree rotation of the plane. 
The orbit of any nonzero vector forms a square which has reflection symmetries not contained in $G$. So the setwise stabilizer of any orbit is strictly larger than $G$, even in the orthogonal group of the plane.

In fact, there are exceptional isomorphism types of finite groups which can never occur as (orthogonal or linear) setwise stabilizer of one of their orbits. In the orthogonal case these groups were fully classified by L. Babai \cite{Bab77}.
In the linear case the classification was recently done for $\k = \R$ and $\k = \Q$ in joint work with F. Ladisch \cite{FrLa16}.
The present note deals with the complex unitary case where no such exceptional isomorphism types arise. In fact, we have an even stronger result.

\begin{thm} \label{thm:main}
Let $G \leq \U(\C^n)$ be a finite unitary group such that some orbit of $G$ spans $\C^n$ (over $\C$). Then there is an open and dense subset of elements $x \in \C^n$ such that $G = \U(Gx)$. In particular, $G$ is the setwise stabilizer of one of its orbits.
\end{thm}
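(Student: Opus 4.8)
The plan is to restrict attention to an open dense set of ``generic'' points $x$, to translate the equality $G=\U(Gx)$ into a statement about permutations of the finite set $G$, and then to rule out the unwanted permutations by an elementary algebraic genericity argument. Throughout, for $S\subseteq\C^n$ I write $\U(S)$ for the setwise stabilizer $\{u\in\U(\C^n):uS=S\}$, so that $G\leq\U(Gx)$ holds for free and only the reverse inclusion is at issue. First I would isolate two open dense conditions on $x$. The set $D_1=\{x:Gx\text{ spans }\C^n\}$ is the complement of the common zero locus of the maximal minors of the $|G|\times n$ matrix whose rows are the vectors $gx$, $g\in G$; these minors are complex polynomials in $x$, and by hypothesis they do not all vanish identically, so $D_1$ is a nonempty Zariski-open set, hence open and dense. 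The set $D_2=\{x:G_x=1\}=\C^n\setminus\bigcup_{1\neq g\in G}\ker(g-1)$ is the complement of a finite union of proper linear subspaces (proper because $g\neq1$ forces $g-1\neq0$), hence also open and dense. It therefore suffices to produce an open dense subset of $D_1\cap D_2$ on which $G=\U(Gx)$.

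For $x\in D_1\cap D_2$ the orbit map $g\mapsto gx$ is a bijection $G\to Gx$, so each $u\in\U(Gx)$ induces a permutation $\sigma_u\in\Sym(G)$ by $u(gx)=\sigma_u(g)\,x$, and $u\mapsto\sigma_u$ is an injective group homomorphism since $Gx$ spans $\C^n$. Writing $\phi_x(a):=\langle ax,x\rangle$ for the standard Hermitian inner product and using that $G$ preserves $\langle\cdot,\cdot\rangle$, unitarity of $u$ forces $\phi_x\bigl(\sigma_u(h)^{-1}\sigma_u(g)\bigr)=\langle\sigma_u(g)x,\sigma_u(h)x\rangle=\langle gx,hx\rangle=\phi_x(h^{-1}g)$ for all $g,h\in G$. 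Conversely, any $\sigma\in\Sym(G)$ with $\phi_x(\sigma(h)^{-1}\sigma(g))=\phi_x(h^{-1}g)$ for all $g,h$ arises as $\sigma_u$ for a unique $u\in\U(Gx)$: setting $u(gx):=\sigma(g)x$, this identity shows that any linear relation $\sum_g a_g\,gx=0$ implies $\sum_g a_g\,\sigma(g)x\perp Gx$, so $u$ extends to a well-defined linear map of $\C^n$, and $u$ preserves the inner product on the spanning set $Gx$, hence is unitary. Finally, $u\in G$ corresponds exactly to $\sigma_u$ being left translation by $u$, and every left translation is ``admissible'' in the above sense. Consequently $G=\U(Gx)$ if and only if the only admissible permutations of $G$ for this $x$ are the left translations by elements of $G$.

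The crux is to show that for generic $x$ nothing else is admissible. There are only finitely many $\sigma\in\Sym(G)$, and for each one that is \emph{not} left translation by an element of $G$ there exist $g,h\in G$ with $\sigma(h)^{-1}\sigma(g)\neq h^{-1}g$ as elements of $\GL(n,\C)$: otherwise, taking $h=1$ gives $\sigma(g)=\sigma(1)g$ in $G$ for all $g$, so $\sigma$ would be left translation by $\sigma(1)$. Put $M_\sigma:=\sigma(h)^{-1}\sigma(g)-h^{-1}g\neq0$; then every $x$ for which $\sigma$ is admissible satisfies $\langle M_\sigma x,x\rangle=\phi_x(\sigma(h)^{-1}\sigma(g))-\phi_x(h^{-1}g)=0$. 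Now $x\mapsto\langle M_\sigma x,x\rangle$ is a polynomial in the real and imaginary parts of the coordinates of $x$, and it is \emph{not} identically zero, because over $\C$ the identity $\langle Mx,x\rangle\equiv0$ forces $M=0$ (polarize, then substitute $x\mapsto ix$); this is exactly the point where the complex case is better behaved than the real one, in which $\langle Mx,x\rangle\equiv0$ only forces $M$ to be skew. Hence $\{x:\langle M_\sigma x,x\rangle=0\}$ is a proper real-algebraic subset of $\C^n$, so closed and nowhere dense. Removing from $D_1\cap D_2$ these finitely many sets, one for each non-translation $\sigma$, leaves an open dense set on which every admissible permutation of $G$ is a left translation, that is, on which $\U(Gx)=G$; the last assertion of the theorem follows by picking any such $x$. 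The main obstacle is the translation carried out in the second paragraph: once $\U(Gx)$ is correctly identified with the permutation group of $G$ cut out by the inner-product data $\phi_x$, the rest is a short genericity argument whose only essential ingredient is the complex polarization fact.
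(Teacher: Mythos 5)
Your proof is correct and follows essentially the same route as the paper: an open dense set of generic points cut out by the spanning condition together with the non-vanishing of the quadratic forms $x\mapsto\langle M_\sigma x,x\rangle$, and the complex polarization fact that $\langle Mx,x\rangle\equiv 0$ forces $M=0$ to rule out non-translation permutations. The only cosmetic differences are that the paper normalizes so that the permutation fixes the identity and works with the sets $O_\pi$ rather than a single witness pair $(g,h)$, and it avoids your stabilizer condition $D_2$ by lifting the permutation of the orbit to a permutation of $G$ directly.
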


Here $\U(\C^n)$ denotes the group of all complex unitary $(n \times n)$-matrices, and $\U(X) = \{ A \in \U(\C^n) : AX = X \}$ denotes the stabilizer of $X \subseteq \C^n$ in $\U(\C^n)$.
In the following, we regard $\C^n$ as a vector space over $\R$ and equip $\C^n$ with the real Zariski topology.
Explicitly, the closed subsets of $\C^n$ are the common zero sets of real polynomials $f \in \R[X_1,Y_1,\dots,X_n,Y_n]$, where
\[ f(x) = f(\re(x_1),\im(x_1),\dots,\re(x_n),\im(x_n)) \text{ for } x \in \C^n. \]
For a general treatment of the Zariski topology over arbitrary fields, we refer to \cite{DuFo04}.

\begin{proof}[Proof of Theorem~\ref{thm:main}]
Let $\C^n$ carry the real Zariski topology. We begin by defining a certain subset $X \subset \C^n$ as the intersection of finitely many nonempty open sets. Since $\C^n \cong \R^{2n}$ is an irreducible topological space, $X$ will be nonempty as well. Furthermore, as any nonempty Zariski-open set, $X$ will be open and dense in the Euclidean topology. Afterwards, we prove that $G = \U(Gx)$ for all $x \in X$.

First, let $U \subset \C^n$ be the set of elements $x \in \C^n$ such that $Gx$ spans $\C^n$. This set is open, as it consists of those elements $x \in \C^n$ satisfying
\[ \det(A_1x, \dots, A_nx) \neq 0 \text{ for certain matrices } A_1, \dots, A_n \in G. \]
It is nonempty by the assumption that at least one orbit of $G$ spans $\C^n$.

Next, for any permutation $\pi \in \Sym(G)$ on $G$ with $\pi(I_n) = I_n$ we consider
\[ O_\pi = \{ x \in \C^n : \langle \pi(A)x, x \rangle \neq \langle Ax, x \rangle \text{ for some } A \in G \}, \]
where $\langle x,y \rangle = y^* x $ is the standard inner product on $\C^n$. As a union of open sets, $O_\pi$ is clearly open (at this point we actually need the \emph{real} Zariski topology). Note that $O_\pi$ may be empty, for example if $\pi$ is the identity permutation. We define $X$ as the intersection of $U$ and all nonempty $O_\pi$, where $\pi \in \Sym(G)$ with $\pi(I_n) = I_n$.

We proceed by showing that $G = \U(Gz)$ for an arbitrary element $z \in X$. Of course the inclusion $G \subseteq \U(Gz)$ is trivial. If $C \in \U(Gz)$, then $C$ permutes the elements of $Gz$ which means there is a permutation $\pi \in \Sym(G)$ such that
\begin{align}
CAz = \pi(A)z \text{ for all } A \in G. \label{eqn:C_permutes}
\end{align}
By multiplying $C$ by some element of $G$ if necessary, we may assume without loss of generality that $Cz=z$, and $\pi(I_n) = I_n$. As $C$ is unitary, we have
\[ \langle \pi(A)z, z \rangle = \langle CAz, Cz \rangle = \langle Az, z \rangle \text{ for all } A \in G, \]
so that $z \notin O_\pi$ by definition. We conclude that $O_\pi$ is empty, and hence 
\begin{align}
\langle \pi(A)v, v \rangle = \langle Av, v \rangle \label{eqn:un_orb_sym}
\end{align}
holds for all $A \in G$ and all $v \in \C^n$. By plugging $v = x+y$ into \eqref{eqn:un_orb_sym} for $x,y \in \C^n$ arbitrary and expanding both sides, we get
\begin{align*}
\langle \pi(A)x,x \rangle + \langle \pi(A)x,y \rangle + \langle \pi(A)y,x \rangle + \langle \pi(A)y,y \rangle\\
= \langle Ax,x \rangle + \langle Ax,y \rangle + \langle Ay,x \rangle + \langle Ay,y \rangle.
\end{align*}
By applying \eqref{eqn:un_orb_sym} again and canceling common terms, we get
\begin{align}
\langle \pi(A)x,y \rangle + \langle \pi(A)y,x \rangle = \langle Ax,y \rangle + \langle Ay,x \rangle. \label{eqn:un_orb_sym2}
\end{align}
In \eqref{eqn:un_orb_sym2} we replace $y$ by $iy$ and multiply both sides by $i$ to get
\begin{align}
\langle \pi(A)x,y \rangle - \langle \pi(A)y,x \rangle = \langle Ax,y \rangle - \langle Ay,x \rangle. \label{eqn:un_orb_sym3}
\end{align}
Combining \eqref{eqn:un_orb_sym2} and \eqref{eqn:un_orb_sym3} yields
\begin{align*}
\langle \pi(A)x,y \rangle = \langle Ax,y \rangle \text{ for all } A \in G.
\end{align*}
Since $x$ and $y$ were arbitrary, we conclude $\pi(A) = A$ for all $A \in G$.
Finally, we apply \eqref{eqn:C_permutes} again to obtain
\[ CAz = \pi(A)z = Az \text{ for all } A \in G. \]
Since $z \in U$ by definition, the orbit of $z$ spans $\C^n$ so that $C = I_n \in G$.
\end{proof}

\section*{Acknowledgement}
I thank Frieder Ladisch for many useful remarks which, in particular, helped to shorten the proof of Theorem~\ref{thm:main} significantly.

\end{document}